\newcommand{\prs}{\langle\;,\;\rangle}
\newcommand{\too}{\longrightarrow}
\newcommand{\br}{[\;,\;]}
\newcommand{\vect}{{\mathrm{span}}}
\newcommand{\G}{{\mathfrak{g}}}
\newcommand{\aaa}{{\mathfrak{a}}}
\newcommand{\bbb}{{\mathfrak{b}}}
\newcommand{\g}{{\mathfrak{g}}}
\newcommand{\ad}{{\mathrm{ad}}}
\newcommand{\tr}{{\mathrm{tr}}}
\newcommand{\ric}{{\mathrm{ric}}}
\newcommand{\Ri}{{\mathrm{Ric}}}
\newcommand{\B}{{\cal B}}
\newcommand{\wi}{\widetilde}
\newcommand{\al}{\alpha}
\newcommand{\e}{\epsilon}
\newcommand{\la}{\lambda}
\font\bb=msbm10
\def\R{\hbox{\bb R}}
\newtheorem{theo}{Theorem}[section]
\newtheorem{pr}{Proposition}[section]
\newtheorem{Le}{Lemma}[section]
\newtheorem{rem}{Remark}
\begin{document}

\begin{frontmatter}


 \author{Oumaima Tibssirte}
 

\title{Einstein Lorentzian solvable unimodular Lie groups}

 \address[label2]{Universit\'e Priv\'ee de Marrakech\\
 Km 13,
 Route d’Amizmiz 42312 - Marrakech - Maroc\\e-mail: o.tibssirte@upm.ac.ma
 }



\begin{abstract}
The goal of this paper is to show that many key results found in the study of Einstein Lorentzian nilpotent Lie algebras can still hold in the more general settings of unimodular Lie algebras and (completely) solvable Lie algebras.
\end{abstract}

\begin{keyword} Einstein Lorentzian manifolds \sep  Nilpotent Lie groups  \sep Nilpotent Lie algebras \sep Unimodular Lie algebras \sep Solvable Lie algebras \sep Double extension process
\MSC 53C50 \sep \MSC 53D15 \sep \MSC 53B25


\end{keyword}

\end{frontmatter}






\section{Introduction}
\noindent Left-invariant pseudo-Riemannian Einstein metrics on nilpotent Lie groups is a research area that has known a significant progress, especially in the last decade. While the celebrated theorem of Milnor (see \cite{milnor}) has among its consequences that in the Riemannian setting no such metrics could exist, except when the underlying Lie group is abelian, the indefinite case is an entirely different story with a handful of examples available in the literature (see \cite{Dconti1}) and a systematic study of the Lorentzian case (see \cite{bou0}, \cite{Boucetta2} and \cite{guediri}). Although, for the time being, there is still no decisive theorem that settles the debate concerning the exact conditions for such metrics to exist, the abundance of results on this particular subject is more apparent now than ever before.\\\\
In contrast, there is little to no information when it comes to the investigation of left-invariant Einstein pseudo-Riemannian, indefinite metrics, on general Lie groups, and a line of study that could potentially lead to a similar development as in the nilpotent case is yet to be found, furethermore it is not known whether the results obtained for nilpotent Lie groups could hold under weaker conditions or admit a useful generalization, even solvable Lie groups which represent in some sense a natural extension of the nilpotent scene, remain a question mark.\\\\
The purpose of this paper is therefore to initiate such an inspection by expanding the context of the main theorems provided by the study of Einstein left-invariant Lorentzian metrics on nilpotent Lie groups to more general situations with an emphasis on the solvable case, and at the same time to shed light on the limitations of these results. The main reference for this paper is \cite{bou0}, and an analogy will be drawn between the different situations throughout the paper.\\\\
\textbf{Paper Outline.} In section 2, we introduce some general preliminaries on pseudo-Euclidean vector spaces and Lie algebras as well as the notations that will be used in the remainder of the paper, the proof of all the listed preliminary results can be found in either \cite{bou0} or \cite{Boucetta2}. We also recall the definition of a solvable Lie algebra and we give a brief account on the Killing form of a completely solvable lie algebra which shall be useful in the proofs of the main theorems. We start section 2 with a generalization of Theorem $7$ in \cite{Dconti1} to the case of an arbitrary Einstein pseudo-Euclidean, unimodular Lie algebra (see Proposition \ref{pr5}), the proof is based on Lemma \ref{pr4} and is similar in spirit to the one found in \cite[Proposition $3.7$]{bou0}. Next we generalize the Lorentzian version of \cite[Proposition $3.1$]{bou0} to the unimodular solvable setting and we observe that the proof can be further generalized to include any arbitray Einstein pseudo-Euclidean Lie algebra with non-degenerate center (see Proposition \ref{pr2}). We dedicate section 4 to establish a variant of the double extension process introduced by Medina-Revoy in \cite{medina} (also studied in the nilpotent setting in \cite{bou0}), we then give necessary and sufficient conditions on the parameters of double extensions that results in Einstein Lorentzian Lie algebras (see Proposition \ref{prdext1} and \ref{prdext2}), we then prove Theorems \ref{thdext1} and \ref{thdext2} which are broader versions of Theorem \cite[Theorem $4.1$]{bou0} and shows that essentially the same result holds more generally for completely solvable unimodular Lie algebras.
\section{Preliminaries}\label{section2}

\noindent A \emph{pseudo-Euclidean  vector space } is  a real vector space  of finite dimension $n$
endowed with  a
nondegenerate symmetric inner product  of signature $(q,n-q)=(-\ldots-,+\ldots+)$.  When
the
signature is $(0,n)$
(resp. $(1,n-1)$) the space is called \emph{Euclidean} (resp. \emph{Lorentzian}).\\

\noindent Let $(V,\prs)$ be a pseudo-Euclidean vector space of signature  $(q,n-q)$. A vector $u\in V$  is called \emph{spacelike} if $\langle u,u\rangle>0$, \emph{timelike} if $\langle u,u\rangle<0$ and \emph{isotropic} if  $\langle u,u\rangle=0$.
 A family
$(u_1,\ldots,u_s)$ of vectors in $V$ is called \emph{orthogonal}  if, for $i,j=1,\ldots,s$
and $i\not=j$, $\langle u_i,u_j\rangle=0$. An orthonormal basis of $V$ is an orthogonal basis $(e_1,\ldots,e_n)$ such that $\langle e_i,e_i\rangle=\pm1$.
A \emph{pseudo-Euclidean basis} of $V$ is a basis $(e_1,\bar e_2,\ldots,e_q,\bar
e_q,f_1,\ldots,f_{n-2q})$ for which the non vanishing products are 
$\langle \bar e_i, e_i\rangle=\langle f_j,f_j\rangle=1$, $i\in\{1,\ldots,q\}$ and $j\in\{1,\ldots,n-2q \}$.
 When $V$ is Lorentzian, we call such a basis
\emph{Lorentzian}.   Pseudo-Euclidean basis always exist.\\

\noindent For any endomorphism $F:V\too V$, we denote by $F^*:V\too V$ its adjoint with respect to $\prs$. We shall make use of the following Lemmas whose proofs can be found in \cite{bou0} :
\begin{Le}\label{le1} Let $(V,\prs)$ be a Lorentzian vector space,  $e$ an isotropic vector and $A$ a skew-symmetric endomorphism. Then $\langle Ae,Ae\rangle\geq0$. Moreover,  $\langle Ae,Ae\rangle=0$ if and only if $Ae=\al e$ with $\al\in\R$.\end{Le}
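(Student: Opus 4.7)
The plan is to reduce the statement to a direct computation in a Lorentzian basis adapted to $e$. The case $e=0$ is trivial (take $\alpha=0$), so assume $e\ne 0$. Being a nonzero isotropic vector in the Lorentzian space $V$, $e$ can be completed to a Lorentzian basis $(e,\bar e,f_1,\ldots,f_{n-2})$ whose only nonvanishing inner products are $\langle e,\bar e\rangle=1$ and $\langle f_j,f_j\rangle=1$. This is a standard consequence of the existence of pseudo-Euclidean bases recalled in the excerpt.

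Next I would expand $Ae=\alpha e+\beta\bar e+\sum_{j=1}^{n-2}c_j f_j$ in this basis and exploit the skew-symmetry of $A$. Since $\langle e,e\rangle=0$, one has $\langle Ae,e\rangle=-\langle e,Ae\rangle=0$; on the other hand, reading the inner product off the decomposition gives $\langle Ae,e\rangle=\beta\langle\bar e,e\rangle=\beta$, so $\beta=0$.

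A direct computation then yields
\[
\langle Ae,Ae\rangle=2\alpha\beta+\sum_{j=1}^{n-2}c_j^2=\sum_{j=1}^{n-2}c_j^2\ge 0,
\]
with equality if and only if every $c_j$ vanishes, i.e.\ $Ae=\alpha e$ for some $\alpha\in\R$. Both assertions of the lemma follow at once. There is no real obstacle here; the whole argument hinges on the convenient choice of Lorentzian basis in which $e$ figures as an isotropic basis vector, after which the Lorentzian signature forces the "transverse" part of $Ae$ to be spacelike (hence of non-negative norm).
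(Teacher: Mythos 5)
Your proof is correct: completing the nonzero isotropic vector $e$ to a Lorentzian (Witt) basis $(e,\bar e,f_1,\ldots,f_{n-2})$, using skew-symmetry to conclude $\langle Ae,e\rangle=0$ and hence kill the $\bar e$-component of $Ae$, and then reading off $\langle Ae,Ae\rangle=\sum_j c_j^2\geq 0$ with equality exactly when $Ae\in\R e$ is a complete and valid argument. Note that the paper itself gives no proof of this lemma, deferring to \cite{bou0}, where the argument is essentially the same decomposition $Ae=\alpha e+\beta\bar e+x$ with $x$ in the Euclidean complement of $\mathrm{span}(e,\bar e)$, so your approach coincides with that of the cited source.
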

\begin{Le}\label{le} Let $(V,\prs)$ be a Lorentzian vector space, $e$ an isotropic vector and $A$ a skew-symmetric endomorphism such that $A(e)=0$. Then: 
	\begin{enumerate}
		\item $\tr(A^2)\leq0$,
		\item $\tr(A^2)=0$ if and only if for any $x\in e^\perp$, $A(x)=\la(x)e$ and in this case $\tr(A\circ B)=0$ for any skew-symmetric endomorphism satisfying $B(e)=0$.
	\end{enumerate}
	
\end{Le}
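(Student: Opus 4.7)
The plan is to adapt a Lorentzian basis to the isotropic vector $e$. Complete $e$ into a Lorentzian basis $(e,\bar e,f_1,\ldots,f_{n-2})$ as described in the preliminaries, so that $\langle e,\bar e\rangle=1$, $\langle f_i,f_j\rangle=\delta_{ij}$ and all other inner products vanish. Since $A$ is skew-symmetric and $A(e)=0$, for every $v\in V$ we have $\langle Av,e\rangle=-\langle v,Ae\rangle=0$, hence $A$ maps $V$ into $e^\perp=\vect(e,f_1,\ldots,f_{n-2})$. Writing $A\bar e=\sum_i c_i f_i$ (the absence of $e$- and $\bar e$-components follows from $\langle A\bar e,\bar e\rangle=0$ and $\langle e,A\bar e\rangle=-\langle Ae,\bar e\rangle=0$) and $Af_j=\alpha_j e+\sum_i\gamma_{ji}f_i$, skew-symmetry applied to the pairs $(\bar e,f_j)$ and $(f_i,f_j)$ forces $c_j=-\alpha_j$ and $\gamma_{ij}+\gamma_{ji}=0$.

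Next I would compute the trace using the basis dual to $(e,\bar e,f_1,\ldots,f_{n-2})$ with respect to $\prs$, which is $(\bar e,e,f_1,\ldots,f_{n-2})$:
\begin{equation*}
\tr(A^2)=\langle\bar e,A^2e\rangle+\langle e,A^2\bar e\rangle+\sum_j\langle f_j,A^2f_j\rangle.
\end{equation*}
The first term vanishes because $A^2e=0$. A direct expansion yields $A^2\bar e=-\bigl(\sum_i\alpha_i^2\bigr)e-\sum_{i,j}\alpha_i\gamma_{ij}f_j$, which lies in $\vect(e,f_1,\ldots,f_{n-2})$, so $\langle e,A^2\bar e\rangle=0$. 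Finally $\langle f_j,A^2f_j\rangle=\sum_i\gamma_{ji}\gamma_{ij}$, and summing over $j$ using $\gamma_{ij}=-\gamma_{ji}$ gives $\tr(A^2)=-\sum_{i,j}\gamma_{ij}^2\leq 0$, proving (1). Equality holds exactly when all $\gamma_{ij}$ vanish; in that case $Ae=0$ and $Af_j=\alpha_j e$, so $Ax=\lambda(x)e$ for every $x\in e^\perp$, where $\lambda$ is the linear form defined by $\lambda(e)=0$ and $\lambda(f_j)=\alpha_j$. The converse is immediate.

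For the last assertion of (2), I would encode a second skew-symmetric endomorphism $B$ with $B(e)=0$ in the same basis via parameters $(\beta_i)$ and $(\eta_{ij})$ skew. Under the hypothesis that the $\gamma$-block of $A$ is zero, direct computation gives $AB(e)=0$, $AB(\bar e)=-\bigl(\sum_i\alpha_i\beta_i\bigr)e$, and $AB(f_j)=\bigl(\sum_i\eta_{ji}\alpha_i\bigr)e$. Every term in the dual-basis trace formula then vanishes because $\langle e,e\rangle=0$ and $\langle f_j,e\rangle=0$, so $\tr(A\circ B)=0$.

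The only real difficulty is bookkeeping: keeping the signs from skew-symmetry straight and using the \emph{correct} dual basis for the Lorentzian form, in which the $e$- and $\bar e$-slots are exchanged (unlike the Euclidean case). Once this is set up, both assertions reduce to routine block computations.
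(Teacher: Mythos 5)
Your proof is correct: the Witt-basis decomposition, the sign bookkeeping ($c_j=-\alpha_j$, $\gamma$ skew), and the dual-basis trace formula all check out, and they yield $\tr(A^2)=-\sum_{i,j}\gamma_{ij}^2$ together with the two statements in (2). Note that this paper does not actually prove Lemma \ref{le} — it defers to the reference \cite{bou0} — but the adapted-basis block computation you give is the standard argument for this fact and is essentially the one used there, so there is nothing further to reconcile.
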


\noindent A Lie group $G$ together with a left invariant pseudo-Riemannian metric $g$ is called a 
\emph{pseudo-Riemannian Lie group}. The  metric $g$ 
defines a  symmetric nondegenerate inner
product $\prs$ on the Lie algebra $\G=T_eG$ of $G$, and conversely, any nondegenerate symmetric
inner product on $\G$
gives rise
to an unique  left invariant pseudo-Riemannian metric on $G$.\\

\noindent We will refer to a Lie
algebra endowed with a nondegenerate symmetric inner
product as a \emph{pseudo-Euclidean Lie algebra}. \\ Levi-Civita connection of $(G,g)$ defines a product $\mathrm{L}:\G\times\G\too\G$ called \emph{ Levi-Civita product}  given by  Koszul's
formula
\begin{eqnarray}\label{levicivita}2\langle
\mathrm{L}_uv,w\rangle&=&\langle[u,v],w\rangle+\langle[w,u],v\rangle+
\langle[w,v],u\rangle.\end{eqnarray}
For any $u,v\in\G$, $\mathrm{L}_{u}:\G\too\G$ is skew-symmetric and $[u,v]=\mathrm{L}_{u}v-\mathrm{L}_{v}u$.
The curvature on $\G$ is given by
$$
\label{curvature}K(u,v)=\mathrm{L}_{[u,v]}-[\mathrm{L}_{u},\mathrm{L}_{v}].
$$
The Ricci curvature $\mathrm{ric}:\G\times\G\too\R$ and its Ricci operator $\Ri:\G\too\G$ are defined by $\langle \Ri (u),v\rangle=\mathrm{ric}(u,v)=\mathrm{tr}\left(w\too
K(u,w)v\right)$. A pseudo-Euclidean Lie algebra is called \emph{flat} (resp. \emph{Ricci-flat}) if $K=0$
(resp. $\ric=0$). It is called Einstein if there exists a constant $\la\in\R$ such that $\Ri=\la\mathrm{Id}_\G$.
 For any $u\in\G$, put $\mathrm{R}_u=\mathrm{L}_u-\ad_u$.
It is well-known that $\mathrm{ric}$ is  given by
\begin{equation}\label{ric1}
\ric(u,v)=-\tr(\mathrm{R}_u\circ
\mathrm{R}_v)-\frac12\left({\langle}\ad_Hu,v{\rangle}+{\langle}\ad_Hv,u{\rangle}\right),
\end{equation} where $H$ is the vector given by $\langle H,u\rangle=\mathrm{tr(\ad_u)}$.
Let us transform this formula to get some
useful formulas. To do so, we consider  $\ad:\G\too\mathrm{End}(\G)$ and $J:\G\too\mathrm{so}(\G,\prs)$  the adjoint representation and the endomorphism given by $J_u(v)=\ad_v^*u$. It is clear that $J_u$ is skew-symmetric, $\ker\ad=Z(\G)$ and $\ker J=[\G,\G]^\perp$.
One can deduce easily from (\ref{levicivita}) that
$$\mathrm{R}_u=-\frac12\left(\ad_u+\ad_u^*\right)-\frac12J_u.$$ 
Thus
\begin{equation}\label{ricci1}
\ric(u,v)=-\frac12\tr(\ad_u\circ\ad_v)-\frac12\tr(\ad_u\circ
\ad_v^*)-\frac14\tr(J_u\circ J_v)-\frac12{\langle}\ad_Hu,v{\rangle}-
\frac12{\langle}\ad_Hv,u{\rangle}.\end{equation}
Define now the auto-adjoint endomorphisms  $\widehat{B}$, ${\mathcal J}_1$
and
${\mathcal J}_2$ by
\begin{eqnarray*}
	\langle\widehat{B} u,v\rangle=\tr(\ad_u\circ\ad_v),\;
	\langle{\mathcal J}_1 u,v\rangle=\tr(\ad_u\circ\ad_v^*),\;
	\langle{\mathcal J}_2 u,v\rangle=-\tr(J_u\circ J_v)=\tr(J_u\circ J_v^*).\end{eqnarray*}
Thus (\ref{ricci1}) is equivalent to
\begin{equation}\label{ricci3}\mathrm{Ric}=-\frac12\left(\widehat{B}+{\mathcal
	J}_1\right)+\frac14{\mathcal J}_2-\frac12(\ad_H+\ad_H^*).
\end{equation}
\noindent When $\G$ is a unimodular Lie algebra then $H=0$ and hence
\eqref{ricci3} becomes
\begin{equation}\label{riccinilpotent}\mathrm{Ric}=-\frac12(\widehat{B}+{\mathcal
		J}_1)+\frac14{\mathcal J}_2,\end{equation}
Since we will deal with nilpotent Lie algebras, only ${\mathcal J}_1$ and ${\mathcal J}_2$ will be relevant so we are going to express them in an useful way. This is based on the notion of structure endomorphisms we introduce now.\\

Let
$(e_1,\ldots,e_p)$ be  a basis of
$\G$. Then, for any $u,v\in\G$, the Lie bracket can be written
\begin{equation}\label{bracket}[u,v]=\sum_{i=1}^p\langle S_iu,v\rangle
e_i,\end{equation}where $S_i:\G\too\G$ are skew-symmetric
endomorphisms with respect to $\prs$. 
 The family $(S_1,\ldots,S_p)$ will be called
\emph{structure endomorphisms} associated to $(e_1,\ldots,e_p)$. One can see  easily from the definition of $J$ and \eqref{bracket} that for
any
$u\in\G$,
\begin{equation}\label{J}J_u=\sum_{i=1}^p\langle u,e_i\rangle S_i\end{equation}
The structure endomorphisms $(K_1,\ldots,K_p)$ associated to a new basis
$(f_1,\ldots,f_p)$  are given by
$$\label{changementofbasis}
K_j=\sum_{i=1}^pp^{ji}S_i,\quad j=1,\ldots,p,$$where $(p^{ij})_{1\leq i,j\leq
	p}$ is  the passage matrix from $(f_1,\ldots,f_p)$ to $(e_1,\ldots,e_p)$.
The following proposition will be useful later, its proof is a matter of simple computation and can be found in \cite{bou0}.
\begin{pr}\label{jjj}Let $(\G,\prs)$ be a pseudo-Euclidean   Lie algebra,
	$(e_1,\ldots,e_p)$  a basis of
	$\G$ and $(S_1,\ldots,S_p)$ the corresponding structure endomorphisms. Then 
	\begin{equation}\label{invariant}
	{\mathcal J}_1=-\sum_{i,j=1}^p\langle e_i,e_j\rangle S_i\circ S_j\quad
	\mbox{and}\quad {\mathcal J}_2u=-\sum_{i,j=1}^p\langle e_i,u\rangle{\tr}(S_i\circ
	S_j)e_j.\end{equation}In particular, $\tr{\mathcal J}_1=\tr{\mathcal J}_2$.
\end{pr}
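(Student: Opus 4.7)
The plan is to derive both formulas by direct computation from the definitions, the identity $[u,v]=\sum_i\langle S_iu,v\rangle e_i$, and the skew-symmetry of the $S_i$. The whole argument is essentially bookkeeping, so the main task is to set up the right expression for $\ad_v^*$ in terms of the structure endomorphisms.

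First, I would compute $\ad_v^*$. Starting from
\[
\langle \ad_v w,e_k\rangle=\langle[v,w],e_k\rangle=\sum_i\langle S_iv,w\rangle\langle e_i,e_k\rangle,
\]
and using that $\langle S_iv,w\rangle=\langle w,S_iv\rangle$, one reads off
\[
\ad_v^* e_k=\sum_i\langle e_i,e_k\rangle S_iv.
\]
Then, applying $\ad_u$ and using the bracket formula again,
\[
\ad_u\ad_v^* e_k=\sum_{i,j}\langle e_i,e_k\rangle\langle S_ju,S_iv\rangle e_j.
\]
Taking the trace by pairing with the dual basis $(\epsilon^k)$ of $(e_k)$ collapses the sum to $\tr(\ad_u\ad_v^*)=\sum_{i,k}\langle e_i,e_k\rangle\langle S_ku,S_iv\rangle$. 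Since $S_i$ is skew-symmetric, $\langle S_ku,S_iv\rangle=-\langle S_iS_ku,v\rangle$, which gives $\langle \mathcal{J}_1u,v\rangle=-\sum_{i,k}\langle e_i,e_k\rangle\langle S_iS_ku,v\rangle$ and hence the stated formula for $\mathcal{J}_1$.

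The formula for $\mathcal{J}_2$ is even more immediate: substitute $J_u=\sum_i\langle u,e_i\rangle S_i$ from \eqref{J} into $J_u\circ J_v$ to get $\tr(J_uJ_v)=\sum_{i,j}\langle u,e_i\rangle\langle v,e_j\rangle\tr(S_iS_j)$, and then $\langle \mathcal{J}_2u,v\rangle=-\tr(J_uJ_v)$ rearranges to $\mathcal{J}_2u=-\sum_{i,j}\langle e_i,u\rangle\tr(S_iS_j)e_j$.

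Finally, for $\tr\mathcal{J}_1=\tr\mathcal{J}_2$, I would just take the trace of each expression. From the first, $\tr\mathcal{J}_1=-\sum_{i,j}\langle e_i,e_j\rangle\tr(S_iS_j)$. From the second, pairing $\mathcal{J}_2e_k$ with $\epsilon^k$ and summing in $k$ collapses the Kronecker delta $\langle e_j,\epsilon^k\rangle=\delta^k_j$ and yields the same expression $-\sum_{i,j}\langle e_i,e_j\rangle\tr(S_iS_j)$, proving the equality. There is no real obstacle here; the only thing to watch is the sign produced by the skew-symmetry of the $S_i$ when moving them across $\prs$ in step one.
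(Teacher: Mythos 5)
Your proof is correct: the computation of $\ad_v^*e_k=\sum_i\langle e_i,e_k\rangle S_iv$, the trace evaluation via the dual basis, and the sign bookkeeping from the skew-symmetry of the $S_i$ all check out, and the trace identity follows as you say. The paper itself gives no proof of this proposition (it defers to \cite{bou0}, calling it ``a matter of simple computation''), and your argument is exactly the direct verification that remark alludes to, valid for an arbitrary basis since you never assume orthonormality.
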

\noindent Let $\g$ be a real Lie algebra and denote $\mathcal{D}^0(\g):=\g$ and for all $k\geq 1$, $\mathcal{D}^{k}(\g):=[\mathcal{D}^{k-1}(\g),\mathcal{D}^{k-1}(\g)]$.Recall that $\g$ is said to be solvable if it satisfies $\mathcal{D}^k(\g)=0$ for some $k\geq 1$. We say that the Lie algebra $\g$ is completely solvable if it has a chain of ideals $$0=I_0\subset I_1\subset\dots\subset I_n=\g,$$ such that $\dim(I_i)=i$ for all $0\leq i\leq n$. In this case, $\g$ admits a basis $\{e_1,\dots,e_n\}$ in which every endomorphism $\ad_u$ with $u\in\g$ is represented by an upper triangular matrix $M_u$, in particular the Killing form $B$ of $\g$ satisfies :
$$ B(u,u)=\tr(\ad_u^2)=\tr(M_u^2)=\sum_{i=1}^n (M_u)_{ii}^2\geq 0,$$
i.e the Killing form of a completely solvable real Lie algebra is positive semi-definite.Finally note that by Engel's Theorem any nilpotent Lie algebra is completely solvable.
\section{Results on solvable and unimodular Lorentzian Einstein Lie algebras}
\begin{Le}\label{pr4} Let $(\G,\prs)$ be a pseudo-Euclidean Lie algebra and let $Q$ denote the symmetric endomorphism $Q=-\frac12\mathcal{J}_1+\frac14\mathcal{J}_2$. Then for any orthonormal basis $(e_1,\ldots,e_p)$ of $\G$ and any endomorphism $E$ of $\G$, we have
	\begin{equation}\label{formula} \tr(QE)=\frac14\sum_{i,j}\e_i\e_j\langle E([e_i,e_j])-[E(e_i),e_j]-[e_i,E(e_j)],[e_i,e_j]\rangle, \end{equation}where $\langle e_i,e_i\rangle=\e_i$.
	
\end{Le}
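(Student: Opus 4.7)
The plan is to compute both sides of \eqref{formula} in coordinates relative to the orthonormal basis $(e_1,\ldots,e_p)$. Let $(S_1,\ldots,S_p)$ be the associated structure endomorphisms, and introduce the inner-product structure constants $c_{ij}^k:=\langle[e_i,e_j],e_k\rangle$, so that $[e_i,e_j]=\sum_k\e_k c_{ij}^k e_k$ and $\langle S_k e_i,e_j\rangle=\e_k c_{ij}^k$. Orthonormality specialises the formulas of Proposition~\ref{jjj} to $\mathcal{J}_1=-\sum_i\e_i S_i^2$ and $\mathcal{J}_2 e_k=-\e_k\sum_j\tr(S_kS_j)e_j$.

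First I split the right-hand side of \eqref{formula} as $\frac14(A-B-C)$ where $A=\sum_{i,j}\e_i\e_j\langle E([e_i,e_j]),[e_i,e_j]\rangle$, $B=\sum_{i,j}\e_i\e_j\langle[E(e_i),e_j],[e_i,e_j]\rangle$, and $C=\sum_{i,j}\e_i\e_j\langle[e_i,E(e_j)],[e_i,e_j]\rangle$. Swapping the dummy indices $i\leftrightarrow j$ and using the antisymmetry of the bracket shows $C=B$, so the right-hand side equals $\tfrac14 A-\tfrac12 B$.

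The lemma therefore reduces to the two identities $A=\tr(\mathcal{J}_2E)$ and $B=\tr(\mathcal{J}_1E)$, which together yield
$$\tr(QE)=-\tfrac12\tr(\mathcal{J}_1E)+\tfrac14\tr(\mathcal{J}_2E)=\tfrac14A-\tfrac12B.$$
For the first identity, expanding $[e_i,e_j]$ gives $A=\sum_{i,j,k,l}\e_i\e_j\e_k\e_l c_{ij}^k c_{ij}^l\langle E(e_k),e_l\rangle$, while on the other side I compute $\tr(\mathcal{J}_2E)=\sum_k\e_k\langle\mathcal{J}_2(E(e_k)),e_k\rangle$ using the intermediate identity $\tr(S_kS_l)=-\e_k\e_l\sum_{m,n}\e_m\e_n c_{mn}^k c_{mn}^l$, which follows from $\langle S_ke_m,e_n\rangle=\e_k c_{mn}^k$, the skew-symmetry of $S_k$, and the orthonormal expansion $\langle u,v\rangle=\sum_n\e_n\langle u,e_n\rangle\langle v,e_n\rangle$. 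A relabeling of dummy indices makes the two quadruple sums coincide. The identity $B=\tr(\mathcal{J}_1E)$ is proved by the same method, now using $\mathcal{J}_1=-\sum_r\e_r S_r^2$ and evaluating $\langle S_r^2 e_k,e_i\rangle=\sum_m\e_m\langle S_r e_k,e_m\rangle\langle S_r e_i,e_m\rangle$ in terms of the $c_{mn}^r$.

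The main obstacle is not conceptual but combinatorial: every step requires careful tracking of the signature signs $\e_i$ and a clean distinction between the two natural forms of the structure constants, namely $c_{ij}^k$ and $\langle S_ke_i,e_j\rangle=\e_k c_{ij}^k$. Once these identifications are set up consistently, the proof reduces to matching quadruple sums via index relabeling.
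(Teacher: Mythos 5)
Your proposal is correct, and it is essentially the paper's own computation read in the opposite direction: the paper expands $\tr(QE)$ through the structure endomorphisms associated to the orthonormal basis and arrives, just before the final symmetrization step, at exactly $\frac14 A-\frac12 B$ in your notation, so your reduction $C=B$ together with the two identities $A=\tr(\mathcal{J}_2E)$ and $B=\tr(\mathcal{J}_1E)$ is the same argument (same ingredients: Proposition \ref{jjj}, skew-symmetry of the $S_i$, orthonormal expansion, index relabeling) organized modularly from the right-hand side instead of as one chain from the left. All of your intermediate identities check out, so there is nothing to fix.
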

\begin{proof} We denote by $(J_1,\ldots,J_p)$ the structures endomorphisms associated to $(e_1,\ldots,e_p)$. By combining \eqref{invariant} and \eqref{J} we get
	\[ QE(u)=\frac12\sum_{i=1}^p\e_i J_{e_i}^2E(u)-\frac14\sum_{i,j=1}^p\e_i\e_j\langle e_i,E(u)\rangle{\tr}(J_{e_i}\circ
	J_{e_j})e_j. \]
	Let compute:
	\begin{eqnarray*}
		\tr(QE)&=&\sum_{j=1}^p\e_j\langle QE(e_j),e_j\rangle\\
		&=&-\frac12\sum_{i,j=1}^p\e_i\e_j\langle J_{e_i}E(e_j),J_{e_i}(e_j)\rangle -\frac14\sum_{i,j=1}^p\e_i\e_j\langle e_i,E(e_j)\rangle{\tr}(J_{e_i}\circ
		J_{e_j})\\
		&=&-\frac12\sum_{i,j=1}^p\e_j\e_i\langle e_i,[E(e_j),J_{e_i}(e_j)]\rangle+\frac14\sum_{i,j,l=1}^p\e_i\e_l\e_j\langle e_i,E(e_j)\rangle\langle J_{e_j}e_l,J_{e_i}e_l\rangle\\
		&=&-\frac12\sum_{i,j,l=1}^p\e_j\e_i\e_l\langle J_{e_i}(e_j),e_l\rangle \langle e_i,[E(e_j),e_l]\rangle+\frac14\sum_{j,l=1}^p\e_l\e_j\langle J_{e_j}e_l,J_{E(e_j)}e_l\rangle\\
		&=&-\frac12\sum_{i,j,l=1}^p\e_j\e_i\e_l\langle {e_i},[e_j,e_l]\rangle \langle e_i,[E(e_j),e_l]\rangle+\frac14\sum_{i,j,l=1}^p\e_l\e_j\e_i\langle J_{e_j}e_l,e_i\rangle \langle e_i,J_{E(e_j)}e_l\rangle\\
		&=&-\frac12\sum_{j,l=1}^p\e_j\e_l \langle [e_j,e_l],[E(e_j),e_l]\rangle+\frac14\sum_{i,j,l=1}^p\e_l\e_j\e_i\langle {e_j},[e_l,e_i]\rangle \langle [e_l,e_i],{E(e_j)}\rangle\\
		&=&-\frac12\sum_{j,l=1}^p\e_j\e_l \langle [e_j,e_l],[E(e_j),e_l]\rangle+\frac14\sum_{i,l=1}^p\e_l\e_i \langle [e_l,e_i],{E([e_l,e_i])}\rangle\\
		&=&	-\frac14\sum_{j,l=1}^p\e_j\e_l \langle [e_j,e_l],[E(e_j),e_l]\rangle-\frac14\sum_{j,l=1}^p\e_j\e_l \langle [e_j,e_l],[e_j,E(e_l)]\rangle\\&&+\frac14\sum_{i,l=1}^p\e_l\e_i \langle [e_l,e_i],{E([e_l,e_i])}\rangle,
	\end{eqnarray*}and the formula follows.
\end{proof}
\begin{pr}\label{pr5} Let $(\G,\prs)$ be a pseudo-Euclidean unimodular Lie algebra having a derivation with non null trace. Then $(\G,\prs)$ is Einstein if and only if it is Ricci flat.
\end{pr}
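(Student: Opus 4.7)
The plan is to express the Ricci operator in a convenient form using unimodularity, then test it against a derivation with non null trace. Since $\G$ is unimodular, the vector $H$ defined by $\langle H,u\rangle=\tr(\ad_u)$ vanishes, so formula \eqref{riccinilpotent} reads $\Ri=-\frac12\widehat{B}+Q$, where $Q=-\frac12\mathcal{J}_1+\frac14\mathcal{J}_2$ is the symmetric endomorphism introduced in Lemma \ref{pr4}.

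Let $D$ be any derivation of $\G$. Two trace identities are needed. First, apply Lemma \ref{pr4} to an orthonormal basis $(e_1,\ldots,e_p)$ with $E=D$; the Leibniz rule $D([e_i,e_j])=[De_i,e_j]+[e_i,De_j]$ makes each summand in \eqref{formula} vanish identically, and hence $\tr(QD)=0$. Second, recall that the Killing form $B$ of any Lie algebra is invariant under every derivation, so $B(Du,u)=-B(u,Du)=0$ for all $u\in\G$; computing in the same orthonormal basis gives
$$\tr(\widehat{B}D)=\sum_{i=1}^p\e_i\langle\widehat{B}D(e_i),e_i\rangle=\sum_{i=1}^p\e_i B(De_i,e_i)=0.$$

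Combining the two vanishings, for \emph{every} derivation $D$ of $\G$ one obtains $\tr(\Ri\circ D)=-\frac12\tr(\widehat{B}D)+\tr(QD)=0$. Now suppose $(\G,\prs)$ is Einstein, so $\Ri=\la\mathrm{Id}_\G$ for some $\la\in\R$. Taking $D$ to be the hypothesized derivation with $\tr(D)\neq 0$ gives $\la\tr(D)=\tr(\Ri\circ D)=0$, forcing $\la=0$; that is, $(\G,\prs)$ is Ricci flat. The converse implication is immediate since a Ricci flat algebra is Einstein with Einstein constant $0$.

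The delicate portion of the argument, namely handling the $\mathcal{J}_1$ and $\mathcal{J}_2$ contributions together, is already absorbed into Lemma \ref{pr4}; once that lemma is available, the only nontrivial observation left is the vanishing of $\tr(\widehat{B}D)$, which follows from the standard skew-invariance of $B$ under derivations. I do not anticipate a substantial obstacle beyond verifying these two trace identities.
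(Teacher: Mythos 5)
Your proof is correct and follows essentially the same route as the paper's: the same decomposition $\Ri=Q-\frac12\widehat{B}$ from unimodularity, the vanishing of $\tr(QD)$ via Lemma \ref{pr4} and the Leibniz rule, and the vanishing of $\tr(\widehat{B}D)$ from the derivation-invariance of the Killing form. No gaps; your explicit remark that each summand in \eqref{formula} dies by the Leibniz rule is exactly the (tacit) content of the paper's appeal to that lemma.
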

\begin{proof}
	Denote $Q:=-\frac{1}{2}\mathcal{J}_1+\frac{1}{4}\mathcal{J}_2$. Since $\g$ is unimodular, equation \eqref{riccinilpotent} gives that :
	$$ \mathrm{Ric}=Q-\frac{1}{2}\widehat{B}.$$
	Next, recall that the Killing form of $\g$ satisfies $B(\tau(u),\tau(v))=B(u,v)$ for any $\tau\in\mathrm{Aut}(\g)$ and any $u,v\in\g$, therefore $B(Du,v)=-B(u,Dv)$ for any $D\in\mathrm{Der}(\g)$, in particular $B(Du,u)=0$. Fix an orthonormal basis $\{e_1,\dots,e_n\}$ of $(\g,\prs)$ and set $\epsilon_i:=\langle e_i,e_i\rangle$, then for any $D\in\mathrm{Der}(\g)$ :
	$$\tr(\widehat{B}\circ D)=\sum_{i=1}^n \epsilon_i \langle \widehat{B}(D(e_i)),e_i\rangle=\sum_{i=1}^n \epsilon_i B(D(e_i),e_i)=0,$$
	on the other hand Lemma \ref{pr4} gives that $\tr(Q\circ D)=0$, this shows that $\tr(\mathrm{Ric}\circ D)=0$ and so if $\g$ is $\lambda$-Einstein we get that $\lambda\tr(D)=0$, which proves the claim.
\end{proof}
\begin{pr}\label{pr2} Let $(\G,\prs)$ be an Einstein Lorentzian solvable unimodular Lie algebra. If $Z(\G)$ is nondegenerate then it is either Euclidean or $(\G,\prs)$ is flat, and $\G$ can be written as $\G=\bbb\oplus \aaa$ where $\bbb$ is an abelian ideal and $\aaa$ is an abelian subalgebra of $\G$, moreover the Levi-Civita product of $\G$ has the form
	$$\mathrm{L}_u=\left\{\begin{array}{l c r}\ad_u& &u\in\aaa\\
	0,& &\text{otherwise}\end{array}\right.$$
	
\end{pr}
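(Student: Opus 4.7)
The plan is to exploit that $\ad_z=0$ for every $z\in Z(\G)$, which makes the unimodular Ricci formula \eqref{riccinilpotent} very restrictive on the centre: $\widehat{B}(z)=\mathcal{J}_1(z)=0$, so $\mathrm{Ric}(z)=\tfrac14\mathcal{J}_2(z)=\lambda z$. Moreover $J_u(z)=\ad_z^*u=0$ for every $u\in\G$, so $Z(\G)\subseteq\ker J_u$ and $\mathrm{Im}\,J_u\subseteq Z(\G)^\perp$.

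Assume $Z(\G)$ is not Euclidean; since it is non-degenerate in the Lorentzian ambient $\G$ it must contain a timelike direction, so (when $\dim Z(\G)\geq 2$) it is properly Lorentzian and admits an isotropic vector $z$. Then $J_z$ is skew with $J_z(z)=0$, so Lemma \ref{le}(1) gives $\tr(J_z^2)\leq 0$; pairing $\mathrm{Ric}(z)=\lambda z$ with $z$ forces $-\tfrac14\tr(J_z^2)=\lambda\langle z,z\rangle=0$, hence $\tr(J_z^2)=0$. The second conclusion of Lemma \ref{le}(2) applied to $B=J_w$ (valid since $J_w(z)=0$) gives $\tr(J_zJ_w)=0$ for all $w\in\G$, so $\mathcal{J}_2(z)=0$ and therefore $\lambda=0$. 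For a null partner $z'$ of $z$ in $Z(\G)$, the rigid form $J_{z'}(x)=\nu'(x)z'$ on $(z')^\perp$ provided by Lemma \ref{le}(2) combined with $J_{z'}(z)=0$ and skew-symmetry forces $\nu'\equiv 0$, hence $J_{z'}=0$ and by symmetry $J_z=0$. Since a Lorentzian space of dimension $\geq 2$ is spanned by its isotropic vectors, linearity of $z\mapsto J_z$ then yields $J_z=0$ for every $z\in Z(\G)$, i.e.\ $\langle z,[u,v]\rangle=\langle J_zu,v\rangle=0$ and so $[\G,\G]\subseteq Z(\G)^\perp$.

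Set $\bbb_0:=Z(\G)^\perp$ and $\aaa_0:=Z(\G)$. Then $\bbb_0$ is an ideal of $\G$ with induced Euclidean metric and $\aaa_0$ is central. A direct check (using $\ad_u|_{\aaa_0}=0$, $\mathrm{Im}\,J_u\subseteq\bbb_0$ and the compatibility $(\ad_v^{\bbb_0})^*=\ad_v^*|_{\bbb_0}$) shows that $\bbb_0$ is unimodular and that its intrinsic Ricci tensor coincides with the restriction of the ambient Ricci tensor, making $\bbb_0$ a Euclidean, unimodular, Ricci-flat Lie algebra. Milnor's theorem then provides a decomposition $\bbb_0=\bbb\oplus\aaa_1$ with $\bbb$ an abelian ideal and $\aaa_1$ an abelian subalgebra acting on $\bbb$ by skew-symmetric derivations; setting $\aaa:=\aaa_0\oplus\aaa_1$ (abelian since $\aaa_0$ is central) gives the claimed decomposition $\G=\bbb\oplus\aaa$. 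Koszul's formula \eqref{levicivita} then collapses term by term under $\aaa\perp\bbb$, abelianness of $\bbb$, centrality of $\aaa_0$ and skew-symmetry of $\ad_u|_\bbb$ for $u\in\aaa_1$, yielding $\mathrm{L}_u=0$ on $\bbb$ and $\mathrm{L}_u=\ad_u$ on $\aaa$; flatness $K\equiv 0$ then follows at once from $K=\mathrm{L}_{[\cdot,\cdot]}-[\mathrm{L}_\cdot,\mathrm{L}_\cdot]$.

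The principal obstacle is the degenerate boundary case $\dim Z(\G)=1$ with $Z(\G)=\R t$ timelike: no isotropic central vector is available and Lemma \ref{le} cannot be invoked directly. There $J_t$ is skew on the Euclidean complement $Z(\G)^\perp$, giving only $\tr(J_t^2)\leq 0$ and $\lambda\leq 0$, and the reduction to $\lambda=0$ with $J_t=0$ has to be extracted from further trace identities, most naturally via Lemma \ref{pr4} applied to the grading derivation of $\G=\R t\oplus Z(\G)^\perp$ or to $\ad_t$ itself; once this is in place the remainder of the argument proceeds as above.
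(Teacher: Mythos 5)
Your treatment of the case $\dim Z(\G)\geq 2$ is correct and follows a genuinely different route from the paper: you use isotropic central vectors together with Lemma \ref{le} to force $\lambda=0$ and $J_z=0$ for every $z\in Z(\G)$, hence $[\G,\G]\subseteq Z(\G)^{\perp}$, so that $\G$ splits as the orthogonal Lie algebra product of the central factor $Z(\G)$ and the Euclidean ideal $Z(\G)^{\perp}$. Even there, one repair is needed: the structure result you invoke for the Euclidean factor (Ricci-flat, solvable, unimodular implies the decomposition into an abelian ideal plus an abelian subalgebra acting skew-symmetrically) is Dotti's theorem \cite{dotti}, not Milnor's; Milnor's theorem characterizes \emph{flat} left-invariant Riemannian metrics, and passing from Ricci-flat to flat is precisely the nontrivial input. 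This is the same theorem the paper uses.

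The genuine gap is the case you defer at the end, $Z(\G)=\R t$ with $t$ timelike, and it is not a boundary nuisance but the main case of the proposition: when $\dim Z(\G)\geq2$ your argument shows $\G$ collapses to a direct product, whereas with a one-dimensional timelike center $\G$ is a genuine central extension, $[u,v]=\langle Ku,v\rangle t+[u,v]_0$ with possibly $K\neq0$, and proving $K=0$ (equivalently $J_t=0$) and $\lambda=0$ is the heart of the matter; your trace identities only yield $\lambda=\frac14\tr(K^2)\leq0$. Both of your suggested fixes fail: $\ad_t=0$ since $t$ is central, so Lemma \ref{pr4} applied to $E=\ad_t$ reads $0=0$; and the projection of $\G$ onto $\R t$ along $t^{\perp}$ is a derivation only when $K=0$, which is exactly what must be proved, so there is no ``grading derivation'' available. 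The paper closes this case by a Riemannian obstruction rather than by trace lemmas: writing $\G=\R e\oplus\G_0$ with $\langle e,e\rangle=-1$ and $\G_0=e^{\perp}$, the Einstein equation evaluated at $e$ and restricted to $\G_0$ gives $\lambda=\frac14\tr(K^2)$ and $\Ri_{\G_0}=\frac14\tr(K^2)\mathrm{Id}_{\G_0}+\frac12K^2$, which is negative definite as soon as $K\neq0$; since $\G_0$ is Euclidean, solvable and unimodular, Dotti's Corollary 3.3 \cite{dotti} forbids negative definite Ricci curvature, forcing $K=0$ and $\lambda=0$, after which Dotti's Theorem 3.1 yields the decomposition and the stated Levi-Civita product. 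Some external input of this kind on the Euclidean complement appears unavoidable, and nothing in your sketch supplies it.
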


\begin{proof} Suppose that $Z(\G)$ is nondegenerate Lorentzian and choose a vector $e\in Z(\G)$ such that $\langle e,e\rangle=-1$. We have $\G=\R e\oplus\G_0$, where $\G_0=e^\perp$. 
	For any $u,v\in\G_0$, put $[u,v]=\langle Ku,v\rangle e+[u,v]_0$, where $K:\G_0\too\G_0$ and $[u,v]_0\in\G_0$.
	It is obvious that $(\G_0,\prs)$ is a Euclidean solvable Lie algebra, moreover $\g_0$ is unimodular. We claim that if $(\G,\prs)$ is Einstein i.e $\mathrm{Ric}_\g=\lambda\mathrm{Id}_\g$ then $\lambda=\frac{1}{4}\tr(K^2)$ and
	$$\Ri_{\prs_0}=\frac14\tr(K^2)\mathrm{Id}_{\G_0}+\frac12K^2.$$
	This implies that the Ricci curvature of $(\G_0,\prs)$ is negative. But since $\g_0$ is a unimodular solvable Euclidean Lie algebra, \cite[Corollary $3.3$]{dotti} gives that its Ricci curvature cannot be negative definite, therefore $\lambda=0$ and so $K=0$ which implies that $\g_0$ is an ideal since $[\g,\g]=[\g_0,\g_0]$. Moreover \cite[Theorem 
	$3.1$]{dotti} shows that $\g_0=\bbb_0\oplus\aaa$ where $\aaa$ is an abelian subalgebra of $\g_0$ and $\bbb_0$ is an abelian ideal of $\g_0$, and if $\mathrm{L}^0$ denotes the Levi-Civita product of $\g_0$ then for any $u\in\g_0$
	$$\mathrm{L}^0_u=\left\{\begin{array}{l c r}\ad_u& &u\in\aaa\\
	0,& &u\in\bbb_0\end{array}\right.$$
	Thus if we set $\bbb:=\R e\oplus[\g_0,\g_0]$ we get the desired result. Let prove our claim. Indeed, we choose an orthonormal basis $(e_1,\ldots,e_d)$ of $\G_0$ and we denote by $(\wi K,\wi S_1,\ldots,\wi S_d)$ the structure endomorphisms associated to the basis $(e,e_1,\dots,e_d)$ of $\g$. Note that all these endomorphisms vanishes at $e$ and hence leave $\G_0$ invariant. We have ${\wi K}_{|\G_0}=K$ and if $S_i={(\wi S_i)}_{|\G_0}$ then $(S_1,\ldots,S_d)$ are the structure endomorphisms of $\G_0$ corresponding to $(e_1,\ldots,e_d)$. If $\G$ is Einstein, according to  \eqref{invariant},
	\[ -\frac12 \widehat{B} -\frac12\wi K^2+\frac12\sum_{i=1}^d\wi S_i^2+\frac14\mathcal{J}_2=\la\mathrm{Id}_\G.\eqno(*) \] 
	If we evaluate this expression at $e$ using the expression of $\mathcal{J}_2$ given in  \eqref{invariant}, we get
	\[ \frac14\tr(\wi K^2)e-\frac14\sum_{i=1}^d\tr(\wi K\circ\wi S_i )e_i=\la e. \]	
	This is equivalent to $\la=\frac14\tr(\wi K^2)=\frac14\tr( K^2)$ and $\tr(\wi K\circ\wi S_i )=0$ for $i\in\{1,\ldots,d \}$, we also observe that the Killing form of $(\g_0,[\;,\;]_0)$ is equal to the restriction to $\g_0$ of the Killing form of $(\g,[\;,\;])$. By taking the restriction of $(*)$ to $\G_0$ we finish the proof of our claim.	
\end{proof}
\begin{rem}
The previous result can be generalized to arbitrary Einstein pseudo-Euclidean, solvable and unimodular Lie algebra, the proof is essentially the same.
\end{rem}
\section{The Double Extension Theorem For Solvable Unimodular Lorentzian Einstein Lie algebras}
Let $(\g_0,\br_0,\prs_0)$ be a Euclidean Lie algebra and set $\g:=\R e\oplus\g_0\oplus\R\bar{e}$. Let $\prs$ be the Lorentzian metric on $\g$ given by $\langle e,e\rangle=\langle\bar{e},\bar{e}\rangle=0$, $\langle e,\bar{e}\rangle=1$ and $\prs_{|\g_0\times\g_0}:=\prs_0$ then define on $\g$ the bracket $\br$ by the expression :
$$[\bar{e},e]=\mu e,\;\;\;[\bar{e},u]=D(u)+\langle b,u\rangle_0 e\;\;\;\text{and}\;\;\;[u,v]=[u,v]_0+\langle K(u),v\rangle_0\;\;\;\text{for any}\;u,v\in\g_0,$$
such that $\mu\in\R$, $b\in\g_0$ and $K,D:\g_0\longrightarrow\g_0$ are two endomorphisms with $K$ skew-symmetric. We say that the triple $(\g,\br,\prs)$ is obtained from $(\g_0,\br_0,\prs_0)$ by a double extension process with parameters $(K,D,\mu,b)$.
\begin{pr}
	\label{prdext1}
	Let $(\g_0,\br_0,\prs_0)$ be a Euclidean Lie algebra and consider the triple $(\g,\br,\prs)$ obtained from $(\g_0,\br_0,\prs_0)$ by a double extension process with parameters $(K,D,\mu,b)$. Denote $\ad^0$ the adjoint representation of $\g_0$.
	\begin{enumerate}
		\item Let $\omega:\g_0\times\g_0\longrightarrow\R$ be the $2$-form given by $\omega(u,v):=\langle K(u),v\rangle_0$, then $(\g,\br)$ is a Lie algebra if and only $D\in\mathrm{Der}(\g_0)$, $\omega\in\mathrm{Z}^2(\g_0,\R)$ i.e a $2$-cocycle and 
		\begin{equation}
		\label{dext0}
		KD+D^*K=\mu K+J_b^0,
		\end{equation}
		where $J_u^0(v):=(\ad_v^0)^*(u)$ for all $u,v\in\g_0$.
		\item Assume $\g$ is a Lie algebra, let $H\in\g$ and $H^0\in\g_0$ be the mean curvature vectors of $\g$ and $\g_0$ respectively i.e for any $u\in\g$ and $v\in\g_0$, $\tr(\ad_u)=\langle H,u\rangle$ and $\tr(\ad_v^0)=\langle H^0,v\rangle$. Then $$H=(\mu+\tr(D))e+H^0.$$ In particular $\g$ is unimodular if and only if $\g_0$ is unimodular and $\tr(D)=-\mu$.
	\end{enumerate}
\end{pr}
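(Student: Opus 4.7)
The plan is to check the Jacobi identity on triples drawn from the basis decomposition $\g = \R e \oplus \g_0 \oplus \R\bar e$, and then to compute the mean curvature by evaluating traces of $\ad_u$ for $u$ in each summand. Since all brackets involving $e$ vanish except $[\bar e,e]=\mu e$, the triples involving two copies of $\{e,\bar e\}$ are automatically trivial, and only two families of Jacobi triples produce conditions: those entirely in $\g_0$, and those of the form $(\bar e,u,v)$ with $u,v\in\g_0$. I would handle these in that order.

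For a triple $u,v,w\in\g_0$, expanding $[u,[v,w]]$ using the defining bracket gives a $\g_0$-component equal to $[u,[v,w]_0]_0$ and a scalar coefficient of $e$ equal to $\omega(u,[v,w]_0)$; the cyclic sum of the $\g_0$-parts vanishes by Jacobi in $\g_0$, and the cyclic sum of the $e$-parts is precisely the condition that $\omega$ is a $2$-cocycle. For a triple $(\bar e,u,v)$, the $\g_0$-component of the cyclic Jacobi sum reads $D([u,v]_0)-[u,D(v)]_0-[D(u),v]_0$, giving $D\in\mathrm{Der}(\g_0)$, and the $e$-component reads
\[
\mu\,\omega(u,v)+\langle b,[u,v]_0\rangle_0-\omega(u,D(v))+\omega(v,D(u))=0.
\]
Rewriting this using $\omega(x,y)=\langle Kx,y\rangle_0$, the skew-symmetry $K^*=-K$, and the identity $\langle b,[u,v]_0\rangle_0=\langle J_b^0(u),v\rangle_0$ (which follows directly from $J_u^0(v)=(\ad_v^0)^*u$), I would rewrite each summand as an inner product pairing something with $v$ and then peel off $v$, yielding exactly $KD+D^*K=\mu K+J_b^0$. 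Conversely, if the three conditions hold, reading the chain of equivalences backwards shows all Jacobi identities are satisfied.

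For the second part, I would fix an orthonormal basis $(e_1,\dots,e_n)$ of $\g_0$ and compute $\tr(\ad_u)$ in the basis $(e,e_1,\dots,e_n,\bar e)$. Writing $\ad_{\bar e}$ as a matrix: on $e$ it is $\mu$, on each $e_i$ the diagonal contribution from the $\g_0$-block is that of $D$, and on $\bar e$ it is $0$, giving $\tr(\ad_{\bar e})=\mu+\tr(D)$. For $\ad_e$, the matrix is zero except for a single off-diagonal entry $-\mu$ in the column of $\bar e$, so $\tr(\ad_e)=0$. For $u\in\g_0$, the only diagonal contribution comes from the $\g_0$-block, which is $\ad_u^0$, so $\tr(\ad_u)=\tr(\ad_u^0)=\langle H^0,u\rangle_0$. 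Decomposing $H=\alpha e+H'+\beta\bar e$ with $H'\in\g_0$ and using the three defining relations $\langle H,\bar e\rangle=\alpha$, $\langle H,e\rangle=\beta$, $\langle H,u\rangle=\langle H',u\rangle_0$, one reads off $\beta=0$, $\alpha=\mu+\tr(D)$, and $H'=H^0$, giving the formula; unimodularity is then immediate.

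The only step that requires care is the algebraic identification of the $e$-component of Jacobi with $KD+D^*K=\mu K+J_b^0$, where one has to convert every inner product to the form $\langle(\cdot)u,v\rangle_0$ using $K^*=-K$ and the definition of $J^0$; everything else is a straightforward bookkeeping of components.
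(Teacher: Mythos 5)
Your proof is correct and follows essentially the same route as the paper: for part 1 you reduce the Jacobi identity to the two nontrivial families of triples (those in $\g_0$, giving the cocycle condition, and those of type $(\bar e,u,v)$, whose $\g_0$- and $e$-components give $D\in\mathrm{Der}(\g_0)$ and \eqref{dext0} respectively), and for part 2 you compute $\tr(\ad_e)=0$, $\tr(\ad_{\bar e})=\mu+\tr(D)$, $\tr(\ad_u)=\tr(\ad_u^0)$ and read off $H$ from the pairing, exactly as in the paper. No gaps.
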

\begin{proof}
	$1.$ Let $u,v,w\in\g_0$, then it is clear that $[[u,v],w]=[[u,v]_0,w]_0+\langle K([u,v]_0),w\rangle$ hence
	$$\small\oint[[u,v],w]=\underset{=0}{\underbrace{\oint[[u,v]_0,w]_0}}+\oint \langle K([u,v]_0),w\rangle=-d\omega(u,v,w).$$
	Next, a straightforward computation shows that $[\bar{e},[u,v]]=D([u,v]_0)+\langle b,[u,v]_0\rangle e+\mu\langle K(u),v\rangle e$ and $[v,[\bar{e},u]]=[v,D(u)]_0+\langle K(v),D(u)\rangle e$, thus
	\begin{align*}
	\oint[\bar{e},[u,v]]=&[\bar{e},[u,v]]+[v,[\bar{e},u]]+[u,[v,\bar{e}]]\\
	=& D([u,v]_0)+\langle b,[u,v]_0\rangle e+\mu\langle K(u),v\rangle e+[v,D(u)]_0+\langle K(v),D(u)\rangle e-[u,D(v)]_0-\langle K(u),D(v)\rangle e\\
	=&D([u,v]_0)-[D(u),v]_0-[u,D(v)]_0+\langle (\ad_u^0)^*(b),v\rangle e+\langle \mu K(u),v\rangle e-\langle KD(u),v\rangle e-\langle D^*K(u),v\rangle e\\
	=&D([u,v]_0)-[D(u),v]_0-[u,D(v)]_0+\langle(J_b^0+\mu K-KD-D^*K)(u),v\rangle e.
	\end{align*}
	Now $(\g,\br)$ is a Lie algebra if and only if it satisfies Jacobi identity, this proves the claim in view of the previous computation.\\\\
	$2.$ First notice that since $\ad_e(\g)\subset \R e$ then $\langle H,e\rangle=\tr(\ad_e)=0$. On the other hand we have that $\ad_{\bar{e}}(e)=\mu e$ and $\ad_{\bar{e}}(u)=D(u)+\langle b,u\rangle e$ so that $\ad_u(\bar{e})\subset (\R e)^\perp$ and $\ad_u(v)=\ad_u^0(v)+\langle K(u),v\rangle$, for any $u,v\in\g_0$. Therefore
	$$\langle H,\bar{e}\rangle=\tr(ad_{\bar{e}})=\mu+\tr(D),\;\;\;\langle H,u\rangle=\tr(\ad_u)=\tr(\ad_u^0)=\langle H^0,u\rangle,$$
	and so we conclude that $H=(\mu+\tr(D))e+H^0$.
\end{proof}
\begin{pr}\label{prdext2}
	Let $\g_0$ be a Euclidean Lie algebra and $\g$ the Lie algebra obtained from $\g_0$ by a double extension process with parameters $(K,D,\mu,b)$. Then $\g$ is Einstein if and only if it Ricci-flat, $\g_0$ is Ricci-flat and for all $u\in\g_0$, the following equations are satisfied :
	\begin{align}
	4\tr(\ad_b^0)+4\mu\tr(D)-2\tr(D^2)-2\tr(DD^*)-\tr(K^2)&=0\label{dext1}\\
	\tr(J_u^0\circ K)+2\tr((D+D^*)\circ\ad_u^0)+2\tr(\ad_{D^*(u)})-2\tr(\ad_{K(u)})&=0.\label{dext2}
	\end{align}
\end{pr}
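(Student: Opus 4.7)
The strategy is to compute $\mathrm{Ric}_\g$ via formula \eqref{ric1}, show that $\mathrm{Ric}(e)=0$ holds automatically for any double extension of the stated form (so that Einstein immediately forces $\lambda=0$), and then decode $\mathrm{Ric}_\g=0$ into the three stated conditions using the Lorentzian splitting $\g=\R e\oplus\g_0\oplus\R\bar e$.

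The first crucial step is to show $\mathrm{Ric}(e)=0$ identically. From the defining brackets, $\ad_e(\g)\subset\R e$, and a direct check gives $\ad_e^*v=-\mu\langle e,v\rangle e$, so $\ad_e^*(\g)\subset\R e$ as well. Moreover $[\g,\g]\subset\R e\oplus\g_0$ is $e$-orthogonal, which yields $J_e=0$. Hence $\mathrm{R}_e=-\tfrac12(\ad_e+\ad_e^*)$ annihilates $\R e\oplus\g_0$ and has image in $\R e$; a short Koszul calculation shows $\langle\mathrm{R}_v(e),e\rangle=0$ for every $v\in\g$, so the only possible nonzero diagonal contribution to $\tr(\mathrm{R}_e\circ\mathrm{R}_v)$ vanishes, giving $\tr(\mathrm{R}_e\circ\mathrm{R}_v)=0$ for all $v$. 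Since $H=(\mu+\tr D)e+H^0\in\R e\oplus\g_0$, one also has $[H,e]=0$ and $\langle[H,v],e\rangle=0$. Formula \eqref{ric1} then yields $\mathrm{ric}(e,v)=0$ for every $v\in\g$, i.e.\ $\mathrm{Ric}(e)=0$. If $\g$ is $\lambda$-Einstein, $\lambda e=\mathrm{Ric}(e)=0$ forces $\lambda=0$, so $\g$ is Ricci-flat.

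It remains to translate $\mathrm{Ric}_\g=0$ into the conditions; since $\mathrm{Ric}(e)=0$, the nontrivial components are $\mathrm{ric}(\bar e,\bar e)$, $\mathrm{ric}(\bar e,u)$, and $\mathrm{ric}(u,v)$ for $u,v\in\g_0$. Using \eqref{ricci3} and \eqref{invariant}, and writing the structure endomorphisms of $\g$ as block perturbations of those of $\g_0$ by the data $K,D,\mu,b$, a direct expansion of $\mathrm{ric}(\bar e,\bar e)$---after invoking the cocycle identity \eqref{dext0} from Proposition \ref{prdext1}(1)---collapses to a scalar multiple of the left-hand side of \eqref{dext1}; similarly $\mathrm{ric}(\bar e,u)=0$ collapses to \eqref{dext2} for each $u\in\g_0$. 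Finally, the restriction of $\mathrm{Ric}_\g$ to $\g_0$ decomposes as $\mathrm{Ric}_{\g_0}+C$ where $C$ is an explicit symmetric form in $K,D,D^*,b$ that vanishes when \eqref{dext0}, \eqref{dext1}, \eqref{dext2} all hold; hence the third condition reduces to $\mathrm{Ric}_{\g_0}=0$. The converse follows by reversing these equivalences.

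The main obstacle is the block-expansion calculation in the last step: one must carefully track how the $\g$-structure endomorphisms built from $K,D,b$ contribute to each of $\widehat B$, $\mathcal J_1$, $\mathcal J_2$ and $\ad_H+\ad_H^*$, and then repeatedly invoke the derivation property of $D$ together with the cocycle identity \eqref{dext0} to collapse the numerous cross-terms mixing $\g_0$-structure with the parameters into the compact expressions \eqref{dext1} and \eqref{dext2} and the vanishing of $C$.
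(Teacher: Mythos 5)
Your opening step is correct and is genuinely slicker than the paper's treatment of the same point: the paper only obtains $\mathrm{Ric}(e)=0$ at the very end of its full component computation, whereas you get it structurally from $J_e=0$, the fact that $\mathrm{R}_e=-\tfrac12(\ad_e+\ad_e^*)$ kills $(\R e)^\perp$ and has image in $\R e$, and $\langle \mathrm{R}_v(e),e\rangle=0$, so that \eqref{ric1} gives $\mathrm{ric}(e,\cdot)=0$ and Einstein forces $\lambda=0$. All of those intermediate claims check out (e.g. $\ad_e^*v=-\mu\langle v,e\rangle e$ is right, and in a Lorentzian basis $\tr(\mathrm{R}_e\circ\mathrm{R}_v)$ does reduce to $\mu\langle\mathrm{R}_v(e),e\rangle$). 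From there on, your strategy is the same as the paper's: expand $\widehat{B}$, $\mathcal{J}_1$, $\mathcal{J}_2$ and $\ad_H+\ad_H^*$ in the splitting $\g=\R e\oplus\g_0\oplus\R\bar e$ and read off the blocks of $\mathrm{Ric}$.

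The problem is that this second part — which is the entire content of the proposition — is asserted rather than carried out. Saying that $\mathrm{ric}(\bar e,\bar e)$ "collapses to a scalar multiple of the left-hand side of \eqref{dext1}" and that $\mathrm{ric}(\bar e,u)=0$ "collapses to \eqref{dext2}" is a restatement of what must be proved; without the block expansion (which the paper does in detail, computing $S_i(\bar e)=D^*(f_i)$, $A(\bar e)=-\mu\bar e+b$, $A(u)=\langle b,u\rangle e+K(u)$, etc.) there is no proof. Moreover two of your specific claims are off relative to what the computation actually shows. First, the cocycle identity \eqref{dext0} is never invoked: all four tensors are computed directly from the defining brackets, and Jacobi enters only in guaranteeing that $\g$ is a Lie algebra at all. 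Second, there is no correction term $C$: the paper's computation yields $\langle\mathrm{Ric}(u),v\rangle=\langle\mathrm{Ric}_0(u),v\rangle$ identically for $u,v\in\g_0$ — the parameters $K,D,b,\mu$ contribute only to the components along $e$ (equivalently, those paired against $\bar e$) — with no appeal to \eqref{dext1} or \eqref{dext2}. Your hedged version ($\mathrm{Ric}_\g|_{\g_0}=\mathrm{Ric}_{\g_0}+C$ with $C$ vanishing once the other conditions hold) would still make the equivalence go through logically, but as written it is an unsupported guess about an uncomputed quantity, and it obscures the cleaner fact ($C\equiv0$) that explains why "$\g_0$ is Ricci-flat" appears as a separate clause in the statement.
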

\begin{proof} Fix an orthonormal basis $(f_1,\dots,f_n)$ of $(\g_0,\prs_0)$ and denote by $\mathrm{Ric}$ and $\mathrm{Ric}_0$ the Ricci operators of $\g$ and $\g_0$ respectively and $\ad^0$ the adjoint representation of $\g_0$. According to \eqref{ricci3}, we can write :
	$$\mathrm{Ric}=-\frac12(\mathcal{J}_1+\widehat{B})-\frac12(\ad_H+\ad_H^*)+\frac14\mathcal{J}_2$$
	$$\mathrm{Ric_0}=-\frac12(\mathcal{J}_1^0+\widehat{B}^0)-\frac12(\ad^0_{H^0}+(\ad^0_{H^0})^*)+\frac14\mathcal{J}_2^0.$$
	Since $\ad_e(\g)\subset\R e$ and $\ad_e(e)=0$ then it follows that $\widehat{B}(e)=0$. Next, for any $u,v\in \g_0$, it is easy to check that :
	$$\begin{cases}\ad_{\bar{e}}^2(e)=\mu^2 e,\;\ad_{\bar{e}}(\bar{e})=0,\;{\ad_{\bar{e}}^2}_{|\g_0}=D^2+\langle D^*b+\mu b,\cdot\;\rangle e,\\
	\ad_{\bar{e}}\circ\ad_u(e)=0,\;\ad_{\bar{e}}\circ\ad_u(\bar{e})\in(\R e)^\perp,\;{\ad_{\bar{e}}\circ\ad_u}_{|\g_0}=D\circ\ad_u^0+(\langle b,\ad_u^0(\cdot)\rangle+\langle\mu K(u),\cdot\rangle e,\\
	\ad_u\circ\ad_v(e)=0,\;\ad_u\circ\ad_v(\bar{e})\in(\R e)^\perp,\;(\ad_u\circ\ad_v)_{|\g_0}=\ad_u^0\circ\ad_v^0+\langle K(u),\ad_v^0(\cdot)\rangle e.
	\end{cases}$$
	This shows that $\langle\widehat{B}(\bar{e}),\bar{e}\rangle=\mu^2+\tr(D^2)$, $\langle\widehat{B}(\bar{e}),u\rangle=\tr(D\circ\ad_u^0)$ and $\langle\widehat{B}(u),v\rangle=\langle\widehat{B}^0(u),v\rangle$, this leads to :
	\begin{equation}
	\label{doubleext1}
	\widehat{B}(u)=\widehat{B}^0(u)+\tr(D\circ\ad_u^0)e,\;\;\;\widehat{B}(\bar{e})=(\mu^2+\tr(D^2))e+\sum_{i=1}^n\tr(D\circ\ad_{f_i}^0)f_i.
	\end{equation}
	We have that $H=H^0+(\mu+\tr(D))e$ and therefore $\ad_H=\ad_{H^0}+(\mu+\tr(D))\ad_e$, this show that $\ad_H(e)=0$ and since $[\g,\g]\subset (\R e)^\perp$ i.e $e\in [\g,\g]^\perp$ then $\ad_H^*(e)=0$. Furthermore for any $u,v\in \g_0$, one can easily check that :
	$$\begin{cases}
	\langle \ad_H(u),v\rangle=\langle\ad_{H^0}^0(u),v\rangle,\;\langle\ad_H^*(u),v\rangle=\langle(\ad_{H^0}^0)^*(u),v\rangle,\\
	\langle\ad_H(u),\bar{e}\rangle=\langle K(H^0),u\rangle,\;\langle\ad_H^*(u),\bar{e}\rangle=-\langle D(H^0),u\rangle,\\
	\langle\ad_H(\bar{e}),\bar{e}\rangle=\langle\ad_H^*(\bar{e}),\bar{e}\rangle=-(\langle b,H^0\rangle+\mu(\mu+\tr(D))).
	\end{cases}$$
	In summary this gives that :
	\begin{align}
	(\ad_H+\ad_H^*)(e)=&0\label{doubleext2.1}\\
	(\ad_H+\ad_H^*)(\bar{e})=&-2(\tr(\ad_b^0)+\mu(\mu+\tr(D)))e+(K(H^0)-D(H^0))\label{doubleext2.2}\\(\ad_H+\ad_H^*)(u)=&(\ad_{H^0}^0+(\ad_{H^0}^0)^*)(u)-\tr(\ad_{K(u)+D^*(u)}^0)e.\label{doubleext2.3}
	\end{align}
	Now let $(A,S_1,\dots,S_n)$ be structure endomorphisms of $(\g,\br)$ corresponding to the family $(e,\bar{e},f_1,\dots,f_n)$ and $(\tilde{S}_1,\dots,\tilde{S}_n)$ of $(\g_0,\br_0)$ corresponding to $(f_1,\dots,f_n)$ then from \eqref{??} we get that for any $u\in\g$ and any $v\in\g_0$ :
	$$\mathcal{J}_1=-\sum_{i=1}^n S_i^2\;\;\;\mathcal{J}_1^0=-\sum_{i=1}^n\tilde{S}_i^2$$
	$$\mathcal{J}_2(u)=-\langle e,u\rangle\tr(A^2)e-\langle e,u\rangle\sum_{i=1}^n\tr(S_i\circ A)f_i-\sum_{i=1}^n\langle f_i,u\rangle\tr(S_i\circ A)e-\sum_{i,j=1}^n\langle f_i,u\rangle\tr(S_i\circ S_j)f_j$$
	$$\mathcal{J}_2^0(v)=-\sum_{i,j=1}^n\langle f_i,v\rangle\tr(\tilde{S}_i\circ \tilde{S}_j)f_j.$$
	Since $\langle S_i(\tilde{u}),\tilde{v}\rangle=\langle [\tilde{u},\tilde{v}],f_i\rangle$, $\langle A(\tilde{u}),\tilde{v}\rangle=\langle\tilde{u},\tilde{v},\bar{e}\rangle$ and $\langle \tilde{S}_i(u),v\rangle=\langle[u,v]_0,f_i\rangle$ for all $\tilde{u},\tilde{v}\in\g$ and all $u,v\in\g_0$, one can easily see that
	
	$$\begin{cases}
	S_i(e)=0,\;\langle S_i(\bar{e}),u\rangle=D^*(f_i),\;\langle S_i(u),v\rangle=\langle \tilde{S}_i(u),v\rangle\\
	\langle A(e),\bar{e}\rangle=-\mu,\;\langle A(e),u\rangle=0,\;\langle A(\bar{e}),u\rangle=\langle b,u\rangle,\;\langle A(u),v\rangle=\langle K(u),v\rangle.\\
	\end{cases}$$
	This shows that $S_i(\bar{e})=D^*(f_i)$, $S_i(u)=\tilde{S}_i(u)-\langle D^*(f_i),u\rangle e$, $A(e)=-\mu e$, $A(\bar{e})=-\mu\bar{e}+b$ and $A(u)=\langle b,u\rangle e+K(u)$, and so we obtain that :
	$$\begin{cases}
	S_i\circ S_j(\bar{e})=\tilde{S}_i(D^*(f_j))-\langle D^*(f_i),D^*(f_j)\rangle,\\
	S_i\circ S_j(u)=\tilde{S}_i\circ\tilde{S}_j(u)-\langle (D\circ\tilde{S}_j)(u),f_i\rangle e,\\
	A^(e)=\mu^2 e,\;A^2(\bar{e})=\mu^2\bar{e}-\langle b,b\rangle e+K(b)-\mu b,\\
	A^2(u)=\langle K(b)+\mu b,u\rangle e+K^2(u),\\
	A\circ S_i(e)=0,\;A\circ S_i(\bar{e})=K(D^*f_i)-\langle Db,f_i\rangle e,\\
	A\circ S_i(u)=\langle \tilde{S}_i(b)-\mu D^*(f_i),u\rangle e+ K(\tilde{S}_i(u)).
	\end{cases}$$
	Consequently we deduce that :
	\begin{align}
	\mathcal{J}_1(\bar{e})=&-\sum_{i=1}^n(\tilde{S}_i\circ D^*)(f_i)+\tr(DD^*)e\label{doubleext3.1}\\
	\mathcal{J}_1(u)=&\mathcal{J}_1^0(u)+\sum_{i=1}^n\langle D\circ\tilde{S}_i(u),f_i\rangle e=\mathcal{J}_1^0(u)+\tr(\ad_u^0\circ D^*) e.\label{doubleext3.2}
	\end{align}
	The previous computation also gives that $\tr(S_i\circ S_j)=\tr(\tilde{S}_i\circ\tilde{S}_j)$, $\tr(A\circ S_i)=\tr(K\circ\tilde{S}_i)$ and $\tr(A^2)=2\mu^2+\tr(K^2)$ which then leads to :
	\begin{align}
	\mathcal{J}_2(\bar{e})=&-(2\mu^2+\tr(K^2))e-\sum_{i=1}^n\tr(K\circ\tilde{S}_i)f_i\label{doubleext4.1}\\
	\mathcal{J}_2(u)=&\mathcal{J}_2^0(u)-\tr(J_u^0\circ K)e.\label{doubleext4.2}
	\end{align}
	Finally if we combine equations \eqref{doubleext1} $\dots$ \eqref{doubleext4.2} then we conclude that for any $u,v\in\g_0$ :
	\begin{align*}
	\mathrm{Ric}(e)=&0,\;\;\;\langle\mathrm{Ric}(u),v\rangle=\langle\mathrm{Ric}_0(u),v\rangle,\\
	\langle\mathrm{Ric}(u),\bar{e}\rangle=&-\frac14(\tr(J_u^0\circ K)+2\tr((D+D^*)\circ\ad_u^0)+2\tr(\ad_{D^*(u)})-2\tr(\ad_{K(u)})),\\
	\langle\mathrm{Ric}(\bar{e}),\bar{e}\rangle=&\frac14(4\tr(\ad_b^0)+4\mu\tr(D)-2\tr(D^2)-2\tr(DD^*)-\tr(K^2)).
	\end{align*}
	Thus $(\g,\br,\prs)$ is Einstein if and only if it is Ricci-flat, $(\g_0,\br_0,\prs_0)$ is Ricci-flat and equations \eqref{dext1} and \eqref{dext2} are satisfied.
\end{proof}
\begin{theo}\label{thdext1}
	Let $\g$ be a completely solvable unimodular, Lorentzian Lie algebra such that $[\g,\g]$ is degenerate. Then $\g$ is Einstein if and only if it is obtained by a double extension process from an abelian Lie algebra with admissible parameters $(K,D,-\tr(D),b)$. In particular $\g$ is Ricci-flat.
\end{theo}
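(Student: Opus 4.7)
My plan is to treat the two directions separately. The reverse implication is a direct consequence of Propositions \ref{prdext1} and \ref{prdext2}: with $\g_0$ abelian, the conditions of Proposition \ref{prdext1} collapse (since $\ad^0\equiv 0$ and $J^0_b = 0$) to $KD + D^*K = \mu K$, unimodularity of $\g$ forces $\mu = -\tr(D)$, equation \eqref{dext2} vanishes identically, and \eqref{dext1} becomes the admissibility relation $\tr(K^2) + 2\tr(D^2) + 2\tr(DD^*) + 4\tr(D)^2 = 0$; Proposition \ref{prdext2} then yields Ricci-flatness, hence Einstein.

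For the direct implication, I would first pick an isotropic generator $e$ of the one-dimensional subspace $V := [\g,\g]\cap[\g,\g]^\perp$, complete it to a pseudo-Euclidean basis $(e, \bar{e}, f_1, \ldots, f_n)$, and set $\g_0 := \vect(f_1, \ldots, f_n)$, which inherits a Euclidean structure. The condition $e \in [\g,\g]^\perp$ is equivalent to $J_e = 0$ and forces $[\g, \g] \subset e^\perp = \R e \oplus \g_0$, so that every bracket has zero $\bar{e}$-component. A suitable renormalisation of $\bar{e}$ and the $f_i$'s (preserving the pseudo-Euclidean structure) then brings the bracket into the double-extension form
\[ [\bar{e}, e] = \mu e,\quad [\bar{e}, u] = D(u) + \langle b, u\rangle_0 e,\quad [u, v] = [u, v]_0 + \langle K(u), v\rangle_0 e,\quad [u, e] = 0, \]
for $u, v \in \g_0$, with $K$ skew-symmetric. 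Unimodularity gives $\mu = -\tr(D)$ by Proposition \ref{prdext1}, and Proposition \ref{prdext2} applied to the Einstein hypothesis yields that $\g$ is Ricci-flat and that $\g_0$ is Ricci-flat, together with the relations \eqref{dext1} and \eqref{dext2}.

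The remaining task is to prove that $\g_0$ is abelian, which is the crux of the argument and depends essentially on complete solvability. As a quotient of the subalgebra $e^\perp \subset \g$, $\g_0$ is completely solvable and unimodular, and being Ricci-flat Riemannian it is flat by the Alekseevski--Kimelfeld theorem. Milnor's structure theorem then splits $\g_0 = \bbb \oplus \aaa$, with $\bbb$ an abelian ideal and $\aaa$ an abelian subalgebra on which each $\ad_a$ acts skew-symmetrically. Complete solvability implies that $\ad_a$ is triangularisable over $\R$, hence has only real eigenvalues; a skew-symmetric real endomorphism with real eigenvalues must vanish, placing $\aaa \subset Z(\g_0)$ and making $\g_0$ abelian. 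This finishes the identification of $\g$ as a double extension of an abelian Lie algebra with parameters $(K, D, -\tr(D), b)$, the only surviving condition being \eqref{dext1}.

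The main obstacle I anticipate is precisely this last step: without complete solvability one would only obtain a Milnor-type flat Riemannian Lie algebra, which need not be abelian (e.g.\ oscillator algebras). Complete solvability is exactly what forces the diagonal part of each $\ad_a$ to be zero and is the structural ingredient that powers the generalisation from the nilpotent setting of \cite{bou0} to the completely solvable one.
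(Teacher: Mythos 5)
Your reverse implication is fine, and your idea for proving that $\g_0$ is abelian (Alekseevskii--Kimelfeld flatness plus Milnor's structure theorem, with complete solvability killing the skew-adjoint operators $\ad_a$) would indeed work \emph{if} the double-extension form were available. The genuine gap is exactly at the sentence claiming that ``a suitable renormalisation of $\bar e$ and the $f_i$'s brings the bracket into the double-extension form''. From $e\in[\g,\g]^\perp$ you only get $[\g,\g]\subset e^\perp=\R e\oplus\g_0$. The two extra relations you write down, $[\bar e,e]=\mu e$ and $[u,e]=0$ for $u\in\g_0$, say that $\mathrm{Im}(\ad_e)\subset\R e$ and that $\ad_e$ vanishes on $e^\perp$; these are structural restrictions on the Lie algebra, not normalisations, and no isometric change of basis fixing the line $\R e$ (which is canonical, being $[\g,\g]\cap[\g,\g]^\perp$) can produce them. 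Concretely, let $\g=\vect(e,\bar e,f,g)$ with nonzero products $\langle e,\bar e\rangle=\langle f,f\rangle=\langle g,g\rangle=1$ and nonzero brackets $[f,e]=-e$, $[f,g]=g$. This algebra is completely solvable, unimodular and Lorentzian, and $[\g,\g]=\vect(e,g)$ is degenerate with $[\g,\g]\cap[\g,\g]^\perp=\R e$; but the centralizer of $e$ is $\vect(e,\bar e,g)$, so every spacelike plane $\g_0\subset e^\perp$ contains a vector of the form $f+\al e$, which does not commute with $e$: no renormalisation yields your normal form. Of course this algebra is not Einstein (its Ricci operator is $\mathrm{diag}(0,0,-\tfrac{3}{2},0)$ in the basis $(e,\bar e,f,g)$), and that is precisely the point: in your argument the Einstein hypothesis enters only through Proposition \ref{prdext2}, i.e.\ \emph{after} the double-extension structure is assumed, whereas that structure is itself a consequence of the Einstein hypothesis and cannot be obtained before invoking it.

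Establishing those relations is where the paper's proof does its real work, and it is also where complete solvability is actually used. Taking $(K,S_1,\dots,S_d)$ to be the structure endomorphisms attached to a basis $(e,f_1,\dots,f_d)$ of $[\g,\g]$, the Einstein equation evaluated at $e$ (using $\mathcal{J}_2(e)=0$ and $\hat{\mathcal{B}}(e)=0$) gives $\mathcal{J}_1(e)=-2\la e$, whence by a Lorentzian positivity argument $S_i(e)=\al_i e$ and $\la=\frac12\sum_i\al_i^2\geq0$; then the trace of the Einstein equation, combined with $\tr(\hat{\mathcal{B}})\geq0$ (positive semi-definiteness of the Killing form, which is exactly what complete solvability provides), forces $\la=0$, $S_i(e)=0$ and $\langle S_i x,y\rangle=0$ for all $x,y\in(\R e)^\perp$. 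These identities deliver in one stroke both your normal form \emph{and} the fact that all brackets of elements of $(\R e)^\perp$ lie in $\R e$, i.e.\ that $\g_0$ is abelian; unimodularity then gives $[e,\g_0]=0$. So once the double-extension structure is legitimately in place, your Alekseevskii--Kimelfeld/Milnor detour is no longer needed. A smaller inaccuracy: $\g_0$ being ``a quotient of the subalgebra $e^\perp$'' does not make it unimodular (quotients of unimodular solvable Lie algebras need not be unimodular); unimodularity of $\g_0$ follows instead from Proposition \ref{prdext1} once the double-extension form is known --- but this point is moot given the main gap.
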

\begin{proof}
	Let $e\neq 0$ be a generator of $[\g,\g]\cap[\g,\g]^\perp$ and choose $\overline{e}\in\g$, $f_1,\dots,f_d\in[\g,\g]$ and $g_1,\dots,g_s\in[\g,\g]^\perp$ such that $(e,\overline{e},f_1,\dots,f_d,g_1,\dots,g_s)$ is a Lorentzian basis of $\g$. Let $(K,S_1,\dots,S_d)$ be the structure endomorphisms of $\g$ corresponding to the basis $(e,f_1,\dots,f_d)$ of $[\g,\g]$. From \eqref{invariant} it is clear that for any $u\in\g$ :
	$$-\mathcal{J}_2(u)=\langle e,u\rangle\tr(K^2)e+\sum_{i=1}^d\langle e,u\rangle\tr(K\circ S_i)f_i+\sum_{i=1}^d\langle f_i,u\rangle\tr(S_i\circ K)e+\sum_{i,j=1}^d\langle f_i,u\rangle\tr(S_i\circ S_j)f_j.$$
	Since $e\in[\g,\g]^\perp$ then by Proposition \ref{jjj}, $\mathcal{J}_2(e)=0$ and since $e\in[\g,\g]$ and $\g$ is solvable then $\hat{\mathcal{B}}(e)=0$. This shows that $\mathcal{J}_1(e)=-2\lambda e$. On the other hand it is clear by \eqref{invariant} that :
	$$ \mathcal{J}_1=-\sum_{i=1}^d S_i^2,$$
	therefore using that $\langle S_i(e),e\rangle=0$, we deduce that :
	$$ 0=\langle -2\lambda e,e\rangle=\langle\mathcal{J}_1(e),e\rangle=\sum_{i,j=1}^d\langle S_i(e),f_j\rangle^2+\sum_{i=1}^d\sum_{j=1}^s\langle S_i(e),g_j\rangle^2.$$
	This shows that $S_i(e)\in\R e$ for $i=1,\dots,d$ i.e $S_i(e)=\alpha_i e$ for some $\alpha_i\in\R$. Consequently :
	$$\lambda=\langle\lambda e,\overline{e}\rangle=-\dfrac{1}{2}\langle\mathcal{J}_1(e),\overline{e}\rangle=\dfrac{1}{2}\sum_{i=1}^d\alpha_i^2\geq 0.$$
	It is clear from \eqref{invariant} and Proposition \ref{jjj} that $-4\lambda\dim(\g)=\tr(\mathcal{J}_1)+\tr(\hat{\mathcal{B}})$, furthermore $\g$ is completely solvable and so its Killing form is positive definite, in particular, using that $\hat{\mathcal{B}}(e)=\hat{\mathcal{B}}(f_i)=0$, we get that :
	$$\tr(\hat{\mathcal{B}})=\sum_{i=1}^s \langle\hat{\B}(g_i),g_i\rangle\geq 0.$$
	On the other hand, it is easy to see that for $i=1,\dots,d$ :
	$$\tr(S_i^2)=2\alpha_i^2-\underset{u_i}{\underbrace{\sum_{j,k=1}^d\langle S_i(f_j),f_k\rangle^2-\sum_{j,k=1}^d\langle S_i(g_j),g_k\rangle^2-2\sum_{j=1}^d\sum_{k=1}^s\langle S_i(f_j),g_k\rangle^2}}.$$
	Hence the previous equation shows that
	$$-4\lambda(\dim\g-1)=2\tr(\hat{\B})+\sum_{i=1}^d u_i,$$
	which is only possible if $\lambda=0$, $\tr(\hat{B})=0$ and $u_i=0$ for all $i=1,\dots,d$, this leads to $S_i(e)=0$ and $S_i((\R e)^\perp)\subset\R e$, which means that for all $j,k=1,\dots,d$ and all $p,\ell=1,\dots, s$ :
	\begin{equation}
		\label{eqbrs}
		[f_j,f_k]=\langle K(f_j),f_k\rangle e,\quad [g_p,g_\ell]=\langle K(g_p),g_\ell\rangle e\quad\text{and}\quad [f_j,g_\ell]=\langle K(f_j),g_\ell\rangle e.
	\end{equation} 
	Set $\g_0:=\vect(f_1,\dots,f_d,g_1,\dots,g_s)$ and $I:=(\R e)^\perp$. Then $I$ is an ideal of $\g$ and $I=\R e\oplus\g_0$, furthermore $\R e$ is an ideal of $I$, this follows from the fact that $S_i(e)=0$ for $i=1,\dots,d$, thus the bilinear map $\br_0:\g_0\times\g_0\longrightarrow\g_0$ given by
	$$ [u,v]:=[u,v]_0+\langle K(u),v\rangle e\quad u,v\in\g_0,$$
	defines a Lie bracket on $\g_0$ and from \eqref{eqbrs} we deduce that $(\g_0,\br_0)$ is an abelian Lie algebra. Finally since $\g$ is unimodular we obtain that for any $u\in\g_0$ :
	$$ 0=\tr(\ad_u)=\langle [u,e],\overline{e}\rangle+\langle [u,\overline{e}],e\rangle+\sum_{i=1}^d\underset{=0}{\underbrace{\langle [u,f_i],f_i\rangle}}=\langle K(u),e\rangle +\sum_{i=1}^d\langle S_i(u),\overline{e}\rangle\underset{=0}{\underbrace{\langle f_i,e\rangle}}=\langle K(u),e\rangle.$$
	This means that $[e,u]=0$ for all $u\in\g_0$. In summary $\g=\R e\oplus\g_0\oplus\R\overline{e}$ and the only non-vanishing brackets of $\g$ are of the form :
	$$[\overline{e},e]=\mu e,\quad[\overline{e},u]=D(u)+\langle b,u\rangle e\quad\text{and}\quad [u,v]=\langle K(u),v\rangle e,\quad u,v\in\g_0,$$
	where $\mu:=\langle K(\overline{e}),e\rangle$, $D(u):=\sum_i\langle S_i(\overline{e}),u\rangle f_i$ and $b:=K(\overline{e})$. In other words $\g$ is obtained by a double extension process from an abelian Lie algebra $\g_0$ with parameters $(K,D,\mu,b)$ and since $\g$ is unimodular then $\mu=-\tr(D)$.
\end{proof}
\begin{theo}\label{thdext2}
	Let $\g$ be a completely solvable unimodular, Lorentzian Lie algebra such that $Z(\g)$ is non-trivial and degenerate. Then $\g$ is Einstein if and only if it is obtained by a double extension process from an abelian Lie algebra with admissible parameters $(K,D,0,b)$. In particular $\g$ is Ricci-flat.
\end{theo}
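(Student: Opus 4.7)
The plan is to adapt, nearly verbatim, the strategy used for Theorem~\ref{thdext1}, but to initiate it with an isotropic vector coming from the degenerate center rather than from $[\g,\g]\cap[\g,\g]^\perp$. Because $\g$ is Lorentzian and $Z(\g)$ is non-trivial and degenerate, the totally isotropic subspace $Z(\g)\cap Z(\g)^\perp$ is one-dimensional; pick a generator $e$. Complete it to a Lorentzian basis $(e,\bar e,f_1,\ldots,f_n)$ with $(f_k)$ a Euclidean orthonormal family in $\mathrm{span}(e,\bar e)^\perp$, and let $(S_e,S_{\bar e},S_{f_1},\ldots,S_{f_n})$ be the associated structure endomorphisms. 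The crucial initial input, which here is automatic rather than derived, is that the centrality of $e$ forces $S_i(e)=0$ for every $i$; consequently $\widehat{B}(e)=0$ and $\mathcal J_1(e)=0$.

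From this setup I would first show $\la=0$. The unimodular Ricci formula \eqref{riccinilpotent} reduces $\mathrm{Ric}(e)=\la e$ to $\mathcal J_2(e)=4\la e$. Expanding $\mathcal J_2(e)$ via Proposition~\ref{jjj} and extracting the $\bar e$-coefficient gives $\tr(S_{\bar e}^2)=0$. Since $S_{\bar e}(e)=0$, Lemma~\ref{le}(2) then ensures $\tr(S_{\bar e}\circ S_e)=0$, and comparing this with the $e$-coefficient of the same equation, which equals $-\tfrac14\tr(S_{\bar e}\circ S_e)=\la$, forces $\la=0$ and hence $\g$ Ricci-flat.

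Next, I would exploit the Ricci-flatness globally, in the spirit of the proof of Theorem~\ref{thdext1}. Taking the trace in \eqref{riccinilpotent} and using $\tr(\mathcal J_1)=\tr(\mathcal J_2)$, the vanishing of $\tr(\mathrm{Ric})$ combined with the positive semi-definiteness of the Killing form of a completely solvable Lie algebra and the inequality $\tr(S_{f_k}^2)\leq 0$ coming from Lemma~\ref{le} (since $S_{f_k}(e)=0$), forces $\tr(\widehat{B})=0$ and $\tr(S_{f_k}^2)=0$ for every $k$. A second application of Lemma~\ref{le} then yields $S_{f_k}(x)\in\R e$ for $x\in e^\perp$; combined with the analogous statement $S_{\bar e}(x)\in\R e$ for $x\in e^\perp$ coming from $\tr(S_{\bar e}^2)=0$, one obtains $[u,v]\in\R e$ for all $u,v\in e^\perp$. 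Taking $\g_0:=e^\perp\cap\bar e^\perp$ this is exactly the statement that $(\g_0,[\cdot,\cdot]_0)$ is abelian.

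The last step is to complete the double extension presentation. Decompose $[\bar e,u]=\al(u)e+\be(u)\bar e+D(u)$ with $D(u)\in\g_0$ for $u\in\g_0$, and compute the matrix of $\ad_u$ in the basis $(e,\bar e,f_1,\ldots,f_n)$: since $\ad_u(e)=0$ and $\ad_u(f_k)=[u,f_k]\in[\g_0,\g_0]\subset\R e$, the only possibly non-zero diagonal entry comes from $\ad_u(\bar e)=-[\bar e,u]$ and equals $-\be(u)$. Unimodularity thus forces $\be(u)=0$, so that $[\bar e,u]\in\R e\oplus\g_0=e^\perp$; writing $\al(u)=\langle b,u\rangle_0$ identifies the data $(K,D,0,b)$ of a double extension with $\mu=0$, and Propositions~\ref{prdext1} and \ref{prdext2} confirm both the admissibility and the converse implication. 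The main obstacle in this scheme, and the one where the present theorem genuinely departs from Theorem~\ref{thdext1}, is precisely this last step: showing that the $\bar e$-component of $[\bar e,u]$ vanishes is not a consequence of Lemma~\ref{le}, and requires the unimodularity hypothesis to be invoked directly through the trace computation.
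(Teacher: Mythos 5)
Your proposal is correct and follows essentially the same route as the paper's own proof: choose an isotropic central vector $e$, use $\mathcal J_1(e)=\widehat{B}(e)=0$ together with Proposition \ref{jjj} and Lemma \ref{le} to force $\la=0$, then combine the trace of \eqref{riccinilpotent} with the positive semi-definiteness of the Killing form (complete solvability) to get $S_i(e^\perp)\subset\R e$ and an abelian $\g_0$, and finally invoke unimodularity to kill the $\bar e$-component of $[\bar e,u]$ and read off the double-extension data with $\mu=0$. Your closing observation about where unimodularity enters is exactly the role it plays in the paper's final trace computation.
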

\begin{proof}
	Let $e\in\mathrm{Z}(\g)$ be an istropic vector and choose $\overline{e}\in\g$ and $f_1,\dots f_d\in\g$ such that $\mathbb{B}:=(e,\overline{e},f_1,\dots,f_d)$ is a Lorentzian basis of $\g$ then denote $(K,\overline{K},S_1,\dots,S_d)$ the structure endomorphisms of $\g$ corresponding to $\mathbb{B}$. Since $e\in\mathrm{Z}(\g)$ we get that $\mathcal{J}_1(e)=\hat{\B}(e)=0$ and thus $4\lambda e=\mathcal{J}_2(e)$. On the other hand, we have for all $u\in\g$ :
	\begin{eqnarray*}
		-\mathcal{J}_2(u)=\langle u,e\rangle\tr(K^2)e+\langle u,e\rangle\tr(K\circ\overline{K})\overline{e}+\sum_{i=1}^d\langle u,e\rangle\tr(K\circ S_i)f_i+\langle u,\overline{e}\rangle\tr(\overline{K}^2)\overline{e}+\langle u,\overline{e}\rangle\tr(\overline{K}\circ K)e \\+\sum_{i=1}^d\langle u,\overline{e}\rangle\tr(\overline{K}\circ S_i)f_i+\sum_{i=1}^d\langle u,f_i\rangle\tr(S_i\circ K)e+\sum_{i=1}^d\langle u,f_i\rangle\tr(S_i\circ\overline{K})\overline{e}+\sum_{i,j=1}^d\langle u,f_i\rangle\tr(S_i\circ S_j)f_j.
	\end{eqnarray*}
	It follows that $0=\langle 4\lambda e,e\rangle=\langle\mathcal{J}_2(e),e\rangle=\tr(\overline{K}^2)$ and since $\overline{K}(e)=0$ then Lemma \ref{le} implies that $\overline{K}((\R e)^\perp)\subset \R e$ and $\tr(\overline{K}\circ K)=\tr(\overline{K}\circ S_i)=0$, in particular 
	$$\lambda=\langle\lambda e,\overline{e}\rangle=\dfrac{1}{4}\langle\mathcal{J}_2(e),\overline{e}\rangle=\tr(\overline{K}\circ K)=0,$$
	i.e $\g$ is Ricci-flat. Next we have that $\g$ is completely solvable which means that its Killing form is positive semi-definite, hence using the formula
	$$-\mathcal{J}_1=K\circ\overline{K}+\overline{K}\circ K+\sum_{i=1}^d S_i^2,$$
	and the fact that $\hat{\B}(e)=0$ we obtain that  $$\tr(\mathcal{J}_1)=-\sum_{i=1}^d\tr(S_i^2)=\sum_{i,j,k=1}^d\langle S_i(f_j),f_k\rangle^2\geq 0\quad\text{and}\quad \tr(\hat{\mathcal{B}})=\sum_{i=1}^d\langle \hat{\B}(f_i),f_i\rangle\geq 0,$$
	thus by taking the trace of the Ricci operator and using that $\g$ is Ricci-flat we get that :
	$$0=-\dfrac{1}{4}\tr(\mathcal{J}_1)-\dfrac{1}{2}\tr(\hat{\mathcal{B}}),$$
	which shows in particular that $\langle S_i(f_j),f_k\rangle=0$ for all $i,j,k=1,\dots,d$ and $S_i((\R e)^\perp)\subset \R e$, i.e
	\begin{equation}
		\label{eqbrs2}
		[f_i,f_j]=\langle K(f_i),f_j\rangle e,\quad i,j=1,\dots,d.
	\end{equation}
	Now set $\g_0:=\vect(f_1,\dots,f_d)$, since $e\in\mathrm{Z}(\g)$ we get that the bilinear map $\br_0:\g_0\times\g_0\longrightarrow\g_0$ given by
	$$ [u,v]:=[u,v]_0+\langle K(u),v\rangle\quad u,v\in\g_0,$$
	defines a Lie bracket on $\g_0$ and from \eqref{eqbrs2} we deduce that $(\g_0,\br_0)$ is an abelian Lie algebra. Finally using that $\g$ is unimodular we obtain that for any $u\in\g_0$ :
	$$0=\tr(\ad_u)=\langle [u,\overline{e}],e\rangle+\sum_{i=1}^d\underset{=0}{\underbrace{\langle [u,f_i],f_i\rangle}}=\langle\overline{K}(u),\overline{e}\rangle=-\langle\overline{K}(\overline{e}),u\rangle.$$
	Therefore $\overline{K}(\overline{e})\in\R e$ and the only non-vanishing Lie brackets of $\g_0$ are of the form :
	$$ [\overline{e},u]=D(u)+\langle b,u\rangle e,\quad [u,v]=\langle K(u),v\rangle e,\quad\text{for}\;u,v\in\g,$$
	where $D(u)=\sum_i\langle S_i(\overline{e}),u\rangle f_i$ and $b=K(\overline{e})$. In other words $\g$ is obtained by a double extension process from an abelian Lie algebra $\g_0$ with parameters $(K,D,0,b)$ and $\tr(D)=0$. 
\end{proof}

\end{document}